\newtheorem{theorem}{Theorem}
\theoremstyle{plain}
\newtheorem{definition}{Definition}
\newtheorem{lemma}{Lemma}
\newtheorem{proposition}{Proposition}
\newtheorem{remark}{Remark}
\numberwithin{equation}{section}
\begin{document}
\title[Short Title]{The reducibility Of An Airy Operator}
\author{Lotfi Saidane}
\address{D\'{e}partement de Math\'{e}matiques, Facult\'{e} des Sciences de Tunis,
Universit\'{e} de Tunis-El Manar, Campus universitaire, 2092, El-Manar, Tunis ,\ TUNISIA}
\email{lotfi.saidane@fst.rnu.tn.}
\date{Febrary 2010}
\subjclass{[2000]Primary 12 H 05, 33C 10, 11C20, 15B36}
\keywords{Airy operator, Setoyanagi operator, Determining factor, Differential equation, Determinant.}

\begin{abstract}
We show that the determinant $\nabla(d,\alpha),$ which seems to be not
considered in the past, is not zero. As an application of this result we
prove that the Setoyanagi operator $S_{p,q}=\partial^{2}-\left(
ax^{p}+bx^{q}\right)  $ is irreducible over $\mathbb{C}\left(  x\right)
\left[  \partial\right]  $.

\end{abstract}
\maketitle

\section{Introduction}

It is well known (see [D, L-R]) that is the operator $\partial^{2}-q,$
$q\in\mathbb{C}\left[  x\right]  $ is reducible in $\mathbb{C}(x)[\partial]$
if and only if the Ricatti equation $u^{\prime}+u^{2}=q$ has a solution in
$\mathbb{C}(x)$. We propose a more manageable criterion using the determining
factors properties of the operator. We can easily prove that an Airy operator
$L=\sum_{i=0}^{n}a_{i}\partial^{i}+Q_{m}\left(  x\right)  $, of bidegree
$\left(  n,m\right)  ,$ $n\leq m,$ and $\left\{  \int R_{i}\left(
x^{1/n}\right)  dx,\ i=1,..,n\right\}  $ as set of determining factor is
reducible in $\mathcal{D}_{K}$ with a right factor of order $1$ if and only
if, $n$ divides $m$ and there exist $i$ in $\left\{  1,...n\right\}  $ such
that the differential equation $L(\partial+R_{i})(u)=0$ has a polynomial
solution. As an application, we prove that the Setoyanagi operator
$S_{p,q}=\partial^{2}-\left(  ax^{p}+bx^{q}\right)  $ (see [D, L-R], Example
4) is irreducible over $\mathbb{C}\left(  x\right)  \left[  \partial\right]
.$ For this purpose, we show that the determinant $\nabla(d,\alpha)=\left\vert
c_{i,j}\right\vert _{1\leq i,j\leq d+1},$ defined in section 4, which seems to
be not considered in the past, is not zero.

\section{Determining Factors}

Let $k$ be an algebraically closed field of characteristic zero, $K$ is the
quotient field of the ring of polynomials $R=k[x],$ $\partial=\frac{d}{dx}$ is
the derivation of $K$, $\mathcal{D}=R[\partial]=k[x,\partial]$ is the Weyl
algebra over $k$ and $\mathcal{D}_{K}=K[\partial]$ is the set of differential
operators with coefficients in $K$, so $\mathcal{D}_{K}$ is an associative
noncommutative $k$-algebra. We denote, also, by $\partial$ the extension of
$\partial$ to the Picard-Vessio extensions of $K$, $V$ an $R$-module of rank
$n$, and $\nabla_{\partial}$ a contraction by $\partial$ of a connection on
$V$, i.e. a $k$-linear map on $V$ satisfying: :%
\[
\nabla_{\partial}\left(  av\right)  =\left(  \partial a\right)  v+a\nabla
_{\partial}\left(  v\right)  ,\ a\in R,\ v\in V.
\]
We define a structure of left $\mathcal{D}$-module on $V$ by setting :%
\[
(\sum_{i=0}^{n}a_{i}\partial^{i})v=\sum_{i=0}^{n}a_{i}\nabla_{\partial}%
^{i}\left(  v\right)  ,\ \ a_{i}\in R,\ \ v\in V
\]
Inversely, if $V$ is a left $\mathcal{D}$-module of finite rank as an
$R$-module, than we can define a connection on it by putting $\nabla\left(
x\right)  =\partial\left(  x\right)  $.

An operator $L\in\mathcal{D}$ is said to be monic if its leading coefficient,
with respect to $\partial$ is one. If $L$ is monic operator than $\mathcal{D}%
$/$\mathcal{D}L$ is a free $R$-module of finite rank. We say that a $D$-module
$V$ is cyclic if there exist a monic operator $L$ in $\mathcal{D}$ such that
$V$ is isomorphic, as a $D$-module to, $\mathcal{D}$/$\mathcal{D}L$. By scalar
extension we can define $V_{K}=K\otimes_{R}V,$ thus $V_{K}$ is a $K-$vector
space of dimension the rank of $V$ as a free $R$-module. As above, we define a
connection on $V_{K}$, denoted by $\nabla_{K} $, or simply $\nabla,$ as
follows :%
\[
\nabla\left(  a\otimes v\right)  =\left(  \partial a\right)  \otimes
v+a\otimes\nabla\left(  v\right)  ,\ \ \ \ a\in K,\ v\in V.
\]
thereby $\nabla$ define a structure of $\mathcal{D}_{K}$-module on $V_{K}$.

\begin{definition}
Two operators of $\mathcal{D}$ (resp. $\mathcal{D}_{K}$) are said equivalent
if their corresponding $D$-modules are equivalent.
\end{definition}

The following properties (which we can find a proof in [Sg], \S \ 2) will be
useful for the rest of this paper.

\begin{proposition}
Two monic operators $L_{1}$ and $L_{2}$ of $\mathcal{D}_{K}$ are equivalent if
and only if there exists $L_{3}$ in $\mathcal{D}_{K}$ having no common factor
on the right with $L_{1}$ and $L_{4}$ in $\mathcal{D}_{K}$ such that
$L_{2}\circ L_{3}=L_{4}\circ L_{1}$.
\end{proposition}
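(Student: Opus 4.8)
The plan is to recast everything in terms of cyclic left $\mathcal{D}_K$-modules. I would write $M_i=\mathcal{D}_K/\mathcal{D}_K L_i$ and let $e_i$ denote the class of $1$, so that $M_i=\mathcal{D}_K e_i$ has annihilator $\mathcal{D}_K L_i$; since $L_i$ is monic of order $n_i$, division with remainder makes $M_i$ an $n_i$-dimensional $K$-vector space. By the definition of equivalent operators given above, ``$L_1$ equivalent to $L_2$'' means $M_1\cong M_2$ as left $\mathcal{D}_K$-modules, and such an isomorphism forces $n_1=n_2=:n$, so I would treat $L_1,L_2$ as having this common order $n$ throughout (this is the only place the statement is mildly incomplete: equality of orders really has to be part of the hypotheses). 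The one structural fact I would invoke is that $\mathcal{D}_K=K[\partial]$ is a Euclidean domain, so for $A,B\in\mathcal{D}_K$ the left ideal $\mathcal{D}_K A+\mathcal{D}_K B$ is generated by the greatest common right divisor of $A$ and $B$; in particular it equals $\mathcal{D}_K$ exactly when $A$ and $B$ have no common factor on the right.

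For the direct implication I would take an isomorphism $\varphi\colon M_2\to M_1$ and use that $M_1$ is generated by $e_1$ to write $\varphi(e_2)=L_3 e_1$, where (after reducing modulo $L_1$ on the right) we may assume $\mathrm{ord}(L_3)<n$. From $L_2 e_2=0$ and $\mathcal{D}_K$-linearity, $0=\varphi(L_2 e_2)=L_2 L_3 e_1$ in $M_1$, i.e. $L_2 L_3\in\mathcal{D}_K L_1$, which is exactly $L_2\circ L_3=L_4\circ L_1$ for some $L_4$. Next, $\varphi(M_2)=\mathcal{D}_K L_3 e_1=(\mathcal{D}_K L_3+\mathcal{D}_K L_1)/\mathcal{D}_K L_1$, so surjectivity of $\varphi$ gives $\mathcal{D}_K L_3+\mathcal{D}_K L_1=\mathcal{D}_K$, i.e. $L_3$ has no common factor on the right with $L_1$.

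For the converse, starting from $L_2\circ L_3=L_4\circ L_1$ with $L_3$ right-coprime to $L_1$, I would define $\varphi\colon M_2\to M_1$ by $\varphi(P e_2)=P L_3 e_1$ and verify three points: it is well defined because $P=Q L_2$ forces $P L_3=Q L_2 L_3=Q L_4 L_1\in\mathcal{D}_K L_1$; it is $\mathcal{D}_K$-linear by construction; and it is surjective since its image is $(\mathcal{D}_K L_3+\mathcal{D}_K L_1)/\mathcal{D}_K L_1=M_1$ by right-coprimality. Because $M_1$ and $M_2$ are $K$-vector spaces of the same dimension $n$, this surjection is automatically an isomorphism, so $L_1$ and $L_2$ are equivalent.

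The greatest-common-right-divisor bookkeeping and the remainder reductions are routine for $\mathcal{D}_K$, so I do not expect a real obstacle there. The delicate point — the one I would flag explicitly — is the last step of the converse: it uses equality of orders, and without that a surjection $M_2\twoheadrightarrow M_1$ of $\mathcal{D}_K$-modules need not be injective, as $\partial^2\circ x=(x\partial+2)\circ\partial$ (with $x$ right-coprime to $\partial$) already illustrates. So I would either state the proposition for operators $L_1,L_2$ of a fixed order $n$, or record at the outset that equivalence forces $n_1=n_2$ and only then run the argument above.
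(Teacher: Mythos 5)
Your argument is correct, and it is worth noting that the paper itself supplies no proof of this proposition at all: it simply refers the reader to [Sg], \S 2. So your self-contained module-theoretic argument --- identifying equivalence with an isomorphism of the cyclic modules $\mathcal{D}_K/\mathcal{D}_K L_i$, reading off $L_3$ as the image of the cyclic generator, and using the Euclidean/B\'ezout property of $K[\partial]$ to translate ``no common right factor'' into $\mathcal{D}_K L_3+\mathcal{D}_K L_1=\mathcal{D}_K$ --- is essentially the standard proof one finds in Singer's paper, and every step checks out: well-definedness of $Pe_2\mapsto PL_3e_1$ from $L_2L_3=L_4L_1$, surjectivity from right-coprimality, and injectivity from equality of $K$-dimensions. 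Your flag about the missing hypothesis is also a genuine and correct observation, not a quibble: as stated, with $L_1=\partial$, $L_2=\partial^2$, $L_3=x$ (a unit in $\mathcal{D}_K$, hence right-coprime to $\partial$) and $L_4=x\partial+2$, the displayed identity holds while the modules have different dimensions and cannot be isomorphic, so the ``if'' direction really does require $\mathrm{ord}(L_1)=\mathrm{ord}(L_2)$ (which is how the result is stated in the cited source). The only cosmetic remark is that you could note explicitly that an isomorphism may be taken in either direction, since the roles of $L_1$ and $L_2$ in the displayed identity are not symmetric; but composing with the inverse isomorphism handles this, so nothing is missing.
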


Let $\tilde{K}$ be a Picard-Vessiot extension of $K$, containing the
Picard-Vessiot extension of $L_{1}$ and $L_{2}$. We denote by $V_{1},$ $V_{2}
$ the respective solutions spaces of of $L_{1}(y)=0$ and $L_{2}(y)=0$. Let $G
$ be the differential Galois group of $\tilde{K}$. Then, the operators $L_{1}
$ and $L_{2}$ are equivalent if and only if their corresponding $G$-modules
$V_{1}$ and $V_{2}$ are isomorphic (the isomorphism is given by the natural
action on $V_{1}\subset\tilde{K}$. The operator $L_{3}$ can be chosen such
that its order is strictly lower than that of $L_{1}$, see [Sg], Lemma 2.5.

\begin{definition}
An element of $\mathcal{D}_{K}$ is called reducible (resp. completely
reducible) if it decomposes into a product of at least two factors of order
$\geq1$ (resp. of order $1$).
\end{definition}

An operator of $\mathcal{D}$ may be reducible in $\mathcal{D}_{K}$ without
being on $\mathcal{D}$ (see [Be] \S \ 3. )).

Let $\hat{K}=k((1/x))$ be the field of meromorphic formal series, near the
infinity, with coefficients in $k$ ($k$ is an algebraically closed field of
characteristic zero), equipped with its usual derivation $\partial=\frac
{d}{dx}$ and its valuation $1/x$-adic $v$. Let $\bar{K}$ be the algebraic
closure of $\hat{K}$, then the valuation and the derivation of $\hat{K}$
extends uniquely to $\bar{K}$. For example if $a\in k[x]$ is a polynomial of
degree $deg(a)$, $v(a)=-deg(a)$.
\protect\rule{0.1in}{0.1in}
More generally, we will put $deg(a)=-v(a)$ for all $a\in\bar{K}$. Let%
\begin{equation}
L=\partial^{n}+\sum_{i=0}^{n-1}a_{i}\partial^{i},\ \ \ \ a_{i}\in\hat
{K}\label{eq}%
\end{equation}
be a differential operator with coefficients in $\hat{K}$. The theorem and
Hukuhara Turrittin (see [I]) shows the existence of a basis ($u_{1},...,u_{n}
$) of solutions of the form:%
\begin{equation}
u_{i}(x)=(expK_{i}(x))(1/x)^{\lambda_{i}}v_{i}%
(1/x),\ \ \ \ \ \ \ i=1,...,n,\label{eq2}%
\end{equation}
where $K_{i}(x)$ is a polynomial in $x^{1/q}$ (for some integer $q$) without
constant term, $\lambda_{i}$ is an element of $k$, $(1/x)^{\lambda_{i}}$ is a
solution (in a Picard-Vessiot extension of $\hat{K}$) of the differential
equation $y^{\prime}=-(\lambda_{i}/x)y$ and%
\[
v_{i}(x)=\sum_{j=0}^{n_{i}}v_{i,j}\left(  1/x\right)  \left(  Logx\right)
^{j},
\]
where%
\[
v_{i,j}\left(  x\right)  =\sum_{k=0}^{\infty}v_{i,j,k}x^{-k/q}\in\bar{K}.
\]

\begin{definition}
The polynomials $K_{i},$ $i=1,...,n$, \ref{eq2}, are called the determining
factors of the operator $L$ \ref{eq}. We say that $L$ is of simple
characteristics, if for every pair $(i,j)$ of distinct indices in $\left\{
1,2,...,n\right\}  ,$ we have $\deg(K_{i}-K_{j})=\deg(K_{i}).$
\end{definition}

\begin{definition}
Let $n,$ $m\in\mathbb{N}$, $n\neq0,$ $P_{n},$ $Q_{m}$ two polynomials in
$k\left[  x\right]  $ of degree $n,$ $m$ and $\partial=\frac{d}{dx}.$ The
operator $L=P_{n}\left(  \partial\right)  +Q_{m}\left(  x\right)  $ is called
an Airy operator of bidegree $\left(  n,m\right)  .$
\end{definition}

Airy operator generalize the classical Airy equation $y"-xy=0$ (where $n=2$
and $m=1$). Their study was initiated by N. Katz [K], in order to calculate
the differential Galois group. Katz shows that this calculation is reduced
precisely to questions of reducibility and self duality for the operator $L$.

\begin{lemma}
The characteristics of an Airy operator $L=P_{n}\left(  \partial\right)
+Q_{m}\left(  x\right)  $ of bidegree $\left(  n,m\right)  $ is simple and the
determining factors $K_{i}$ are polynomials in $x^{1/n}$ of degree $\frac
{n+m}{n}$ without constant terms. Their derivatives $K_{i}^{\prime}=R_{i}$
are, in the case $m\geq n,$ solutions to the inequality%
\[
\deg\left(  P_{n}\left(  R\right)  +Q_{m}\left(  x\right)  \right)  \leq
\frac{nm-m-n}{n}.
\]
where $R\in\bar{K}$ is a polynomial in $x^{1/n}$. \cite{lem1}
\end{lemma}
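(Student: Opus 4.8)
The plan is to compute the Newton polygon (equivalently, the slopes and associated "characteristic" data) of the Airy operator $L=P_{n}(\partial)+Q_{m}(x)$ directly from its coefficients, and then read off the determining factors from it. Write $L=\partial^{n}+a_{n-1}\partial^{n-1}+\cdots+a_{1}\partial+(a_{0}+Q_{m}(x))$ after making $L$ monic (dividing by the leading coefficient of $P_{n}$); note $v(a_{i})\geq 0$ for $i\geq 1$ while the term of valuation $-m$ sits in front of $\partial^{0}$. First I would set up the substitution $y=\exp(\int R\,dx)\,w$ with $R\in\bar K$ a Puiseux series in $1/x$ of positive degree; the leading behavior of $L$ applied to such a $y$ is governed by $P_{n}(R)+Q_{m}(x)$, because every time $\partial$ lands on the exponential it produces a factor $R$, and the corrections (derivatives of $R$, binomial cross terms) have strictly smaller degree than the principal term when $\deg R>0$. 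So the exponential part $K=\int R\,dx$ of a solution of the form \ref{eq2} must satisfy exactly the displayed inequality $\deg(P_{n}(R)+Q_{m}(x))\le \frac{nm-m-n}{n}$, i.e. the principal part of $P_{n}(R)+Q_{m}(x)$ must cancel. This is the content of the last sentence of the Lemma, and it is essentially a bookkeeping argument once the degree estimates on the lower-order corrections are in place.

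Next I would extract the degree of $K_{i}$. Since $P_{n}$ has degree $n$, the term $P_{n}(R)$ has degree $n\cdot\deg R$, and for this to balance $Q_{m}(x)$ (degree $m$) we need $n\deg R=m$, hence $\deg R=m/n$ and $\deg K_{i}=\deg R+1=\frac{m+n}{n}=\frac{n+m}{n}$, and $K_{i}$ is a polynomial in $x^{1/n}$ with no constant term (the constant term is absorbed into $v_{i}$). The leading coefficient $\rho$ of $R$ is then a root of $c_{n}\rho^{n}+c_{m}=0$ where $c_{n},c_{m}$ are the leading coefficients of $P_{n},Q_{m}$; this polynomial has $n$ distinct nonzero roots in $k$ (as $k$ is algebraically closed of characteristic zero and $c_{n}c_{m}\neq 0$), giving $n$ distinct leading terms $\rho_{i}x^{m/n}$ for the $R_{i}$. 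Continuing, one determines the subsequent Puiseux coefficients of each $R_{i}$ recursively; I would argue that at each stage the coefficient is uniquely determined (the recursion is linear in the unknown coefficient with nonzero leading factor coming from $nc_{n}\rho_{i}^{n-1}\ne 0$), so each choice of $\rho_{i}$ extends to exactly one $R_{i}$, and hence to exactly one $K_{i}$ up to the non-principal data.

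Finally, simplicity of the characteristics: for $i\neq j$, the polynomials $K_{i}$ and $K_{j}$ have the same degree $\frac{n+m}{n}$ but \emph{distinct} leading coefficients $\rho_{i}\neq\rho_{j}$ (since the $\rho$'s are the distinct roots of $c_{n}\rho^{n}+c_{m}$), so the top-degree terms do not cancel in $K_{i}-K_{j}$, whence $\deg(K_{i}-K_{j})=\frac{n+m}{n}=\deg(K_{i})$, which is precisely the definition of simple characteristics. The main obstacle I anticipate is not conceptual but the careful degree accounting in the first step: one must verify that in $L(y)$ with $y=\exp(\int R\,dx)w$ every contribution other than $(P_{n}(R)+Q_{m}(x))w$ — namely terms involving $R',R'',\dots$, the lower coefficients $a_{i}$ paired with powers of $R$, and the action of $\partial$ on $w$ — has degree strictly below $n\deg R = m$ (using $m\ge n$, so that $\deg R\ge 1$ and each differentiation drops the degree by at least $1/n$ while losing a factor of degree $\ge 1$ relative to a power of $R$). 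Once that uniform estimate is pinned down, the rest is the routine recursion and root-counting sketched above.
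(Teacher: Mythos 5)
Your sketch is correct and is the standard Hukuhara--Turrittin / Newton-polygon computation; the paper itself gives no argument for this lemma (it only cites [S], Lemma 1.2.1), and that reference proceeds in essentially the same way, via the substitution $y=e^{\int R\,dx}w$ and degree accounting on $P_{n}(R)+Q_{m}(x)$. The one place that genuinely needs care --- verifying that every contribution to $e^{-\int R\,dx}L(e^{\int R\,dx}w)$ other than $(P_{n}(R)+Q_{m}(x))w$ has degree at most $\frac{nm-m-n}{n}$, the worst case being the $R^{n-2}R'$ term coming from $\partial^{n}$, of degree exactly $\frac{(n-1)m}{n}-1=\frac{nm-m-n}{n}$ --- is precisely the bookkeeping you flag at the end, and it does work out as you describe.
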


\begin{proof}
See [S], Lemma 1.2.1 page 525.
\end{proof}

\begin{definition}
The Fourier transform of an operator $L=\sum_{i=0}^{N}a_{i}\left(  x\right)
\partial^{i},\ a_{i}\in R$ of $\mathcal{D}$ , relatively to $\partial$, is the
operator $\mathcal{F}\left(  L\right)  =\sum_{i=0}^{N}a_{i}\left(
\partial\right)  \left(  -x\right)  ^{i}.$
\end{definition}

\begin{remark}
The Fourier transform $\mathcal{F}$ is a $k$-linear bijection from
$\mathcal{D}$ to itself. We have
\[
\mathcal{F}^{2}\left(  L\right)  =\left[  -1\right]  ^{\ast}L==\sum_{i=0}%
^{N}a_{i}\left(  -x\right)  \left(  -\partial\right)  ^{i}.
\]

\end{remark}

\begin{proposition}
$L\in\mathcal{D}$ is reducible (in $\mathcal{D)}$ if and only if its Fourier
transform $\mathcal{F}\left(  L\right)  $ is reducible.
\end{proposition}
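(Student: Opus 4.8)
The plan is to promote the observation in the preceding remark — that $\mathcal F$ is a $k$-linear bijection of $\mathcal D$ with $\mathcal F^{2}=[-1]^{\ast}$ — to the sharper statement that $\mathcal F$ is a $k$-algebra \emph{automorphism} of the Weyl algebra $\mathcal D=k[x,\partial]$. First I would identify $\mathcal F$ as the unique $k$-algebra endomorphism of $\mathcal D$ determined by $x\mapsto\partial$ and $\partial\mapsto -x$: this assignment is admissible because it respects the single defining relation of the Weyl algebra, namely $\partial x-x\partial=1$, since $(-x)\partial-\partial(-x)=\partial x-x\partial=1$. It is then an automorphism because $\mathcal F^{2}$ is the substitution $x\mapsto -x,\ \partial\mapsto -\partial$, which is an involution (so $\mathcal F^{4}=\mathrm{id}$ and $\mathcal F^{-1}=\mathcal F^{3}$). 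The only verification needed is that this algebra map agrees with the operator $\mathcal F$ of the preceding definition; that is immediate on the Poincar\'e--Birkhoff--Witt basis $\{x^{j}\partial^{i}\}_{i,j\ge 0}$, on which both send $x^{j}\partial^{i}$ to $\partial^{j}(-x)^{i}$.

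Granting that $\mathcal F$ is a ring automorphism, the equivalence becomes essentially formal. If $L=L_{1}L_{2}$ in $\mathcal D$, then $\mathcal F(L)=\mathcal F(L_{1})\mathcal F(L_{2})$; applying $\mathcal F^{-1}=\mathcal F^{3}$ to any factorization of $\mathcal F(L)$ gives the reverse implication, so it suffices to transfer one direction. More structurally, $\mathcal F(\mathcal D L)=\mathcal D\,\mathcal F(L)$, so $\mathcal F$ induces an isomorphism of left $\mathcal D$-modules $\mathcal D/\mathcal D L\to\mathcal D/\mathcal D\mathcal F(L)$ (twisted along $\mathcal F$), whence the poset of left factors of $L$ is carried bijectively onto that of $\mathcal F(L)$, and irreducibility is exchanged with irreducibility.

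The step that needs care — and the one I expect to be the real obstacle — is matching the size condition ``factors of order $\ge 1$'' through $\mathcal F$, because $\mathcal F$ interchanges the order (the $\partial$-degree) with the $x$-degree: for every nonzero $M\in\mathcal D$ one has $\operatorname{ord}\mathcal F(M)=\deg_{x}M$ and $\deg_{x}\mathcal F(M)=\operatorname{ord}M$ (no cancellation of top terms occurs, since $\partial x-x\partial=1$ only lowers degrees). Thus a factorization into factors of positive order is a priori only sent to a factorization into factors of positive $x$-degree, and one must argue back. Here I would use that $k$ is algebraically closed: the only way a positive-order factor $L_{i}$ of $L$ can fail to produce a positive-order factor of $\mathcal F(L)$ is $L_{i}\in k[\partial]$, and then $L_{i}$ splits as a product of linear operators $\partial-\alpha$; regrouping such a factor with an adjacent one restores a genuine occurrence of $\partial$, and symmetrically on the $\mathcal F$-side one uses that a polynomial in $k[x]$ of degree $\ge 2$ factors. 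This regrouping is where the bookkeeping lives; it also shows the statement is cleanest for operators of genuinely mixed bidegree — exactly the Airy and Setoyanagi operators of interest — the purely polynomial operators $x^{n},\partial^{n}$ being the degenerate cases one checks separately. Assembling these pieces yields ``$L$ reducible $\iff\mathcal F(L)$ reducible'', in the form later applied to $S_{p,q}$.
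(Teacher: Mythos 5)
The paper states this proposition without proof (only the subsequent biunitary refinement is referred to [S]), so I can only judge your argument on its own terms. Your first two paragraphs are correct and give the right proof: $\mathcal{F}$ is the $k$-algebra automorphism of the Weyl algebra determined by $x\mapsto\partial$, $\partial\mapsto-x$ (it respects $\partial x-x\partial=1$, agrees with the paper's formula on the basis $\{x^{j}\partial^{i}\}$, and $\mathcal{F}^{4}=\mathrm{id}$), and an automorphism carries a factorization $L=L_{1}L_{2}$ into non-units to a factorization of $\mathcal{F}(L)$ into non-units, and back via $\mathcal{F}^{-1}$. Since the units of $\mathcal{D}$ are exactly $k^{\times}$, this already proves the proposition once ``reducible in $\mathcal{D}$'' is read ring-theoretically, i.e.\ as ``a product of two non-constant factors of $\mathcal{D}$'' --- which is the only reading under which the statement is true.

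The genuine gap is your third paragraph. The regrouping argument cannot be repaired, because the proposition is false if one insists, as in the paper's definition for $\mathcal{D}_{K}$, that both factors have order $\geq 1$ in $\partial$: take $L=x\partial^{2}=(x\partial)\cdot\partial$, a product of two factors of order $1$ and an operator of genuinely mixed bidegree. Then $\mathcal{F}(L)=\partial(-x)^{2}=\partial\circ x^{2}=x^{2}\partial+2x$ has order $1$, and since orders add in the domain $\mathcal{D}$, $\mathcal{F}(L)$ admits no factorization into two factors of order $\geq 1$ at all; no regrouping of linear factors $\partial-\alpha$ (here the offending factor is $\partial$ itself) can create one. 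So you should drop the third paragraph and instead state explicitly that reducibility in $\mathcal{D}$ means factorization into non-units; the order-$\geq 1$ convention is only appropriate for the $\mathcal{D}_{K}$-statement (the biunitary proposition that follows), where the units of $K[\partial]$ are precisely the order-$0$ elements and the two notions coincide.
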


An operator $L\in\mathcal{D}$ is called biunitary if its leading coefficient
relatively to $\partial$ and $x$ are unity of $R.$ The following result
improve the previous.

\begin{proposition}
A biunitary operator $L\in\mathcal{D}$ is reducible (in $\mathcal{D}_{K}$ if
and only if its Fourier transform $\mathcal{F}\left(  L\right)  $ is reducible
in $\mathcal{D}_{K}.$
\end{proposition}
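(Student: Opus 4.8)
The plan is to recast ``reducible in $\mathcal{D}_{K}$'' as the existence of a nonzero proper $\mathcal{D}$-submodule of a cyclic module, and then to exploit that the Fourier transform is an automorphism of the \emph{ring} $\mathcal{D}$. First I would check that $\mathcal{F}$ is the $k$-algebra automorphism of $\mathcal{D}$ given by $x\mapsto\partial$, $\partial\mapsto-x$: this is well defined because $[-x,\partial]=1$, it squares to $x\mapsto-x$, $\partial\mapsto-\partial$, and on $L=\sum a_{i}(x)\partial^{i}$ it returns $\sum a_{i}(\partial)(-x)^{i}$, in agreement with the stated formula and with $\mathcal{F}^{2}(L)=[-1]^{\ast}L$. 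Being a ring automorphism, $\mathcal{F}$ carries the left ideal $\mathcal{D}L$ onto $\mathcal{D}\mathcal{F}(L)$, hence induces an inclusion-preserving bijection between the $\mathcal{D}$-submodules of $M:=\mathcal{D}/\mathcal{D}L$ and those of $\mathcal{F}(M)=\mathcal{D}/\mathcal{D}\mathcal{F}(L)$; in particular $M$ has a nonzero proper $\mathcal{D}$-submodule iff $\mathcal{F}(M)$ does.

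Next I would pin down what ``biunitary'' buys. Its first half (unit leading coefficient in $\partial$) makes $L$, up to a constant, monic in $\partial$, so by the fact recalled in Section~2 the module $M$ is free of rank $n=\operatorname{ord}_{\partial}L$ over $R$ and $\mathcal{D}_{K}/\mathcal{D}_{K}L=K\otimes_{R}M$. Its second half (unit leading coefficient in $x$) says the top $x$-degree part of $L$ is, up to a constant, $x^{D}$ with $D=\deg_{x}L$, and a short Leibniz computation then shows $\mathcal{F}(L)$ is again monic in $\partial$, of order $D\ge1$. Hence $\mathcal{F}(M)=\mathcal{D}/\mathcal{D}\mathcal{F}(L)$ is also $R$-free of finite rank with $\mathcal{D}_{K}/\mathcal{D}_{K}\mathcal{F}(L)=K\otimes_{R}\mathcal{F}(M)$, so $L$ and $\mathcal{F}(L)$ satisfy the same hypotheses and play symmetric roles.

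The core is the equivalence I would prove for any $L'\in\mathcal{D}$ monic in $\partial$ of order $n'\ge1$, with $M'=\mathcal{D}/\mathcal{D}L'$: \emph{$L'$ is reducible in $\mathcal{D}_{K}$ iff $M'$ has a nonzero proper $\mathcal{D}$-submodule}. ($\Rightarrow$) From a factorization $L'=AB$ in $\mathcal{D}_{K}$ with $B$ monic of order $r\in\{1,\dots,n'-1\}$ one gets the $\mathcal{D}_{K}$-submodule $P=\mathcal{D}_{K}B/\mathcal{D}_{K}L'$ of $M'_{K}$ of dimension $n'-r$; then $N:=P\cap M'$ is a $\mathcal{D}$-submodule, nonzero after clearing denominators, and since $M'$ is $R$-torsion-free one has $\operatorname{rank}_{R}N=\dim_{K}P=n'-r$, so $N$ is proper. ($\Leftarrow$) Given a nonzero proper $\mathcal{D}$-submodule $N$ of $M'$, the quotient $M'/N$ is a $\mathcal{D}$-module finitely generated over $R$, hence $R$-torsion-free (the lemma below), so $\operatorname{rank}_{R}N<n'$; localizing, $K\otimes_{R}N$ is a nonzero proper $\mathcal{D}_{K}$-submodule of $M'_{K}$, and since $\mathcal{D}_{K}=K[\partial]$ is a principal ideal domain it equals $\mathcal{D}_{K}B'/\mathcal{D}_{K}L'$ for a right factor $B'$ of $L'$ with $1\le\operatorname{ord}_{\partial}B'\le n'-1$, i.e.\ $L'=A'B'$ is a nontrivial factorization. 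Applying this to $L$ and to $\mathcal{F}(L)$ and splicing with the first paragraph gives the chain $L$ reducible in $\mathcal{D}_{K}\iff M$ has a nonzero proper $\mathcal{D}$-submodule $\iff\mathcal{F}(M)$ has one $\iff\mathcal{F}(L)$ reducible in $\mathcal{D}_{K}$.

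The main obstacle is the torsion-freeness used in ($\Leftarrow$): a $\mathcal{D}$-module $W$ finitely generated over $R=k[x]$ has no $R$-torsion. I would argue that the torsion submodule $T\subseteq W$ is again a $\mathcal{D}$-submodule (from $fw=0$ one gets $f^{2}\nabla_{\partial}w=0$), that $T$ is finitely generated and torsion over the PID $R$, and that for each point $x=a$ the localization $T_{(x-a)}$, a module over $k[x]_{(x-a)}[\partial]$, satisfies: by the identity $\nabla_{\partial}\big((x-a)w\big)=w+(x-a)\nabla_{\partial}w$ and $\operatorname{char}k=0$, every element killed by $x-a$ lies in $\bigcap_{N\ge1}(x-a)^{N}T_{(x-a)}=0$ (Krull's intersection theorem over the Noetherian local ring $k[x]_{(x-a)}$); hence $T_{(x-a)}=0$ for all $a$, so $T=0$. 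The remaining ingredients---that $\mathcal{F}$ is a ring automorphism, the bookkeeping of leading coefficients under $\mathcal{F}$, and the dictionary between $\mathcal{D}_{K}$-submodules and right factors over $K[\partial]$---are routine.
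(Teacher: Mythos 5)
Your argument is correct, and it is essentially the standard one: the paper itself gives no proof here (it defers to [S], Proposition 1.3.1), but the route you take --- $\mathcal{F}$ as the $k$-algebra automorphism $x\mapsto\partial$, $\partial\mapsto -x$ of the Weyl algebra, inducing a lattice bijection between submodules of $\mathcal{D}/\mathcal{D}L$ and of $\mathcal{D}/\mathcal{D}\mathcal{F}(L)$, combined with the dictionary between right factors over $K[\partial]$ and $\mathcal{D}$-submodules --- is exactly the mechanism this kind of statement rests on (going back to Katz). You also correctly isolate the one place the biunitary hypothesis is actually used, namely that the unit leading coefficient in $x$ forces $\mathcal{F}(L)$ to be monic in $\partial$ of order $\deg_x L$, so that both $\mathcal{D}/\mathcal{D}L$ and $\mathcal{D}/\mathcal{D}\mathcal{F}(L)$ are $R$-free and the submodule/factorization dictionary applies symmetrically; and the torsion-freeness lemma you supply (via the $\operatorname{char}k=0$ descent $w=-(x-a)\nabla_{\partial}w$ and Krull intersection) is the right way to close the $(\Leftarrow)$ direction. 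No gaps.
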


\begin{proof}
See [S], Proposition 1.3.1.
\end{proof}

\section{Reducibility}

Katz ([Ka], page 26), has proved that an Airy operator of bidegree $(n,m),$
where $n$ and $m$ are coprime, is irreducible in $\mathcal{D}_{K}.$ The
following result improve the preview.

\begin{theorem}
An Airy operator $L=\sum_{i=0}^{n}a_{i}\partial^{i}+Q_{m}\left(  x\right)  $,
of bidegree $\left(  n,m\right)  ,$ $n\leq m,$ and $\left\{  \int R_{i}\left(
x^{1/n}\right)  dx,\ i=1,..,n\right\}  $ as set of determinig factor is
reducible in $\mathcal{D}_{K}$ with a right factor of order $1$ if and only if
the following two two conditions are satisfied :\newline1) $n$ divides
$m$,\newline2) there exist $i$ in $\left\{  1,...n\right\}  $ such that the
differential equation $L(\partial+R_{i})(u)=0$ has a polynomial
solution.\label{th1}
\end{theorem}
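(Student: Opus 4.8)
The plan is to establish both implications by translating "right factor of order $1$" into the existence of a suitable exponential solution, and then matching that solution against the determining factors produced by the Hukuhara–Turrittin theorem.

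First I would set up the forward direction. Suppose $L$ is reducible in $\mathcal{D}_K$ with a right factor of order $1$, say $L = M \circ (\partial - f)$ with $f \in K = k(x)$. Then $\partial - f$ has the solution $y = \exp\!\left(\int f\,dx\right)$, which is therefore a solution of $L(y) = 0$. Viewing everything in $\hat K = k((1/x))$ and applying the Hukuhara–Turrittin normal form \eqref{eq2}, this solution must coincide (up to scalar) with one of the basis solutions $u_i$, so $\exp\!\left(\int f\,dx\right) = (\exp K_i(x))(1/x)^{\lambda_i} v_i(1/x)$ for some $i$. Comparing the exponential parts forces $\int f\,dx - K_i(x)$ to have at most a logarithmic/polynomially-bounded growth, i.e. $f = K_i' + (\text{rational correction of lower degree})$; since $f$ is actually rational while $K_i$ is a genuine polynomial in $x^{1/n}$ of degree $(n+m)/n$, the dominant term $K_i' = R_i$ must itself be rational, which is exactly where $n \mid m$ enters — the degree $(n+m)/n$ of $K_i$ must be an integer, forcing $n \mid m$. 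This gives condition (1). For condition (2): writing $f = R_i + g$ with $\deg g$ small, substitute $y = \exp\!\left(\int R_i\,dx\right) u$ into $L(y)=0$. A direct computation (this is the conjugation identity $L \circ \exp(\int R_i\,dx) = \exp(\int R_i\,dx)\circ L(\partial + R_i)$ at the level of operators) shows that $u$ satisfies $L(\partial + R_i)(u) = 0$, and the remaining factor $\partial - g$ being order one over $k(x)$ with $\deg g < \deg R_i$ forces $u$ to be a polynomial (the other formal solutions of $L(\partial+R_i)$ have non-rational or genuinely series behavior, by Lemma~\ref{lem1} and simplicity of the characteristics). Hence (2) holds.

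For the converse, assume (1) and (2). By (1), $n \mid m$, so by Lemma~\ref{lem1} each $R_i = K_i'$ is a polynomial in $x^{1/n}$ of integer degree $(m/n)\cdot$(stuff) — more precisely, $n\mid m$ makes $(n+m)/n$ an integer and one checks that the relevant $R_i$ (the one supplied by (2)) lies in $k[x]$. By (2) there is $i$ and a polynomial $u \ne 0$ with $L(\partial + R_i)(u) = 0$. Then set $y = u\,\exp\!\left(\int R_i\,dx\right)$; the same conjugation identity gives $L(y) = 0$. Since $u$ is a polynomial and $R_i \in k(x)$, the logarithmic derivative $f := y'/y = u'/u + R_i$ lies in $k(x)$, so $\partial - f \in \mathcal{D}_K$ is an order-one right factor of $L$: indeed $L(y)=0$ with $y' = fy$ means $L$ kills the solution space of $\partial - f$, hence $\partial - f$ divides $L$ on the right in $\mathcal{D}_K$ (right division in $K[\partial]$ always produces a remainder of order $0$, which must annihilate $y$ and therefore vanish). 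Thus $L$ is reducible with an order-one right factor.

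The main obstacle I anticipate is the forward direction's degree bookkeeping: showing cleanly that a rational logarithmic derivative $f$ forces the matching determining factor $K_i$ to be a polynomial in $x$ (not merely in $x^{1/n}$), which is precisely the content of $n \mid m$. This requires invoking the simplicity of the characteristics from Lemma~\ref{lem1} (so that the exponential part of each $u_i$ is unambiguous and no cancellation among distinct $K_i$ can manufacture a spurious rational solution) together with the fact that $v_i(1/x)$ contributes no exponential growth. Once the exponential parts are pinned down, everything else is the conjugation identity $L\circ\exp(\int R\,dx) = \exp(\int R\,dx)\circ L(\partial+R)$ and elementary right-division in $K[\partial]$, both of which are routine.
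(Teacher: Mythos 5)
The paper gives no proof of Theorem \ref{th1} (it defers to [S], Proposition 2.1.1), so your plan has to be judged on its own terms. The architecture --- translating an order-one right factor $\partial - f$ into the exponential solution $y=\exp(\int f\,dx)$, matching its exponential part against a determining factor via Hukuhara--Turrittin to get $n\mid m$, and using the conjugation identity $L\circ e^{\int R_i}=e^{\int R_i}\circ L(\partial+R_i)$ together with right division in $K[\partial]$ for the converse --- is the standard and correct one. But there is a genuine gap at the crux of the forward direction, namely the claim that $u=\exp\left(\int(f-R_i)\,dx\right)$ is a \emph{polynomial}. You justify this by behaviour at infinity (``the other formal solutions of $L(\partial+R_i)$ have non-rational or genuinely series behavior, by Lemma \ref{lem1} and simplicity of the characteristics''). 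That argument only controls $u$ at $x=\infty$: at best it shows $u$ has power-like growth there, which is equally true of $(x-a)^{\lambda}$ for arbitrary $\lambda\in k$, of rational functions with finite poles, and of many other non-polynomial candidates. What actually forces $u$ to be a polynomial is a statement about the \emph{finite} plane: an Airy operator has constant nonzero leading coefficient, hence no finite singular points, so every solution $y$ of $L(y)=0$ is holomorphic; consequently $f=y'/y$ can have only simple finite poles with nonnegative integer residues, and after subtracting its polynomial part $R_i$ one gets $f-R_i=\sum_j n_j/(x-a_j)$ with $n_j\in\mathbb{N}$, whence $u=C\prod_j(x-a_j)^{n_j}$. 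Without this local analysis at the finite poles of $f$, the step fails.

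A secondary soft spot is the converse: you need the particular $R_i$ furnished by condition (2) to lie in $k(x)$, or else $\partial-(u'/u+R_i)$ is not an element of $\mathcal{D}_K$ and yields no factorization over $K$. Divisibility $n\mid m$ only makes the top exponent $m/n$ of $R_i$ an integer; that \emph{all} exponents of $R_i\in k[x^{1/n}]$ are integers requires tracking the Newton-polygon recursion producing the determining factors (successive exponents differ by integers), or a Galois-descent argument on the set $\{R_1,\dots,R_n\}$. You flag this with ``one checks'', which is acceptable in a plan, but it is not automatic and must be supplied before the argument is complete.
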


\begin{proof}
See [S] Proposition 2.1.1.
\end{proof}

If $L=\sum_{i=0}^{n}a_{i}\partial^{i}+Q_{m}\left(  x\right)  ,$ then the
adjoint operator of $L$, denoted $L^{\nu},$ is defined by%
\[
L^{\nu}=\sum_{i=0}^{n}\left(  -\partial\right)  ^{i}a_{i}+Q_{m}\left(
x\right)  .
\]
With the same hypothesis as the theorem, we can easily prove that $L$ is
reducible in $\mathcal{D}_{K}$ with a left factor of order $1$ if and only if
$n$ divides $m$ and there exist $i$ in $\left\{  1,...n\right\}  $ such that
the differential equation $L^{\nu}(\partial-R_{i})(u)=0$ has a polynomial solution.

It is well known (see [D, L-R]) that the operator $\partial^{2}-q,$
$q\in\mathbb{C}\left[  x\right]  $ is reducible in $\mathbb{C}(x)[\partial]$
if and only if the Ricatti equation $u^{\prime}+u^{2}=q$ has a solution in
$\mathbb{C}(x)$. We propose a more manageable criterion using the determining
factor properties of the operator.

\section{Application}

Let $p,$ $q\in\mathbb{N},$ $q<p,$ $a,$ $b\in\mathbb{C},$ $ab\neq0.$ Let
$S_{p,q}=\partial^{2}-\left(  ax^{p}+bx^{q}\right)  $ the
$<$%
Setoyanagi operator. If we assume that the operator $S_{p,q}$ is reducible in
$\mathbb{C}(x)[\partial]$, then the Theorem \ref{th1} implies that $p$ is an
even integer. We assume $p=2m$. The determining factors $\int R$ of $S_{p,q}$,
according to Lemma \ref{lem1}, are given by:%
\[
R=\varepsilon\sqrt{a}x^{m}\sum_{i=0}^{r}\binom{1/2}{i}\left(  b/a\right)
^{i}x^{i(q-2m)}%
\]
where $r=E\left(  \frac{m}{2m-q}\right)  $ is the integral part of $\frac
{m}{2m-q}$ and $\varepsilon=\pm1.$ Using Theorem \ref{th1}, we deduce that
there exists $d\in\mathbb{N}$ such that if $d_{m-1}$ denote the coefficient of
$x^{m-1}$ in $R^{2}-[ax^{2m}+bx^{q}]$ so we obtain%
\[
\frac{d_{m-1}}{\left(  2d+m\right)  \sqrt{a}}=\varepsilon,\ with\ \varepsilon
=\pm1.
\]
We therefore find the conditions cited by [D, L-R], namely, $\frac{m+1}{2m-q}
$ is a natural number $s\geq1$ (in fact $(r+1)(q-2m)+2m$ is equal to $m-1$)
and condition%
\[
d_{m+1}=2a\binom{1/2}{s}\left(  b/a\right)  ^{s}=\varepsilon\left(
2d+m\right)  \sqrt{a},
\]
we can write as follows: there exist $d\in\mathbb{N}$ such that%
\[
\varepsilon\sqrt{a}\binom{1/2}{s}\left(  b/a\right)  ^{s}-\frac{m}{2}=d.
\]
Setoyanagi (cited by [D, L-R], Example 4) gave a necessary and sufficient
conditions of reducibility in the case $\frac{m+1}{2m-q}=s\leq2$. For $s=2$,
we can show that this case reduces to the case $m=q=1$.

As an application, we will consider $m=2$ and $q=3$ (so $s=3$), we show the
following result:

\begin{proposition}
The Setoyanagi operator $S_{4,3}=\partial^{2}-\left(  ax^{4}+bx^{3}\right)  ,$
$a,$ $b\in\mathbb{C},$ $a$ or $b\neq0,$ is irreducible in $\mathbb{C}%
(x)[\partial].$
\end{proposition}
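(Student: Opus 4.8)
The plan is to argue by contradiction, converting reducibility (via Theorem~\ref{th1}) into the existence of a polynomial solution of an explicit second-order operator, and then into the non-vanishing of the determinant $\nabla(d,\alpha)$ of this section. First the degenerate cases. If $a=0$, then $S_{4,3}=\partial^{2}-bx^{3}$ is an Airy operator of bidegree $(2,3)$ with $\gcd(2,3)=1$, hence irreducible in $\mathbb{C}(x)[\partial]$ by Katz's theorem recalled at the start of Section~3. If $b=0$, then the single determining-factor derivative of $S_{4,3}=\partial^{2}-ax^{4}$ is $R=\varepsilon\sqrt{a}\,x^{2}$, so $S_{4,3}(\partial+R)=\partial^{2}+2\varepsilon\sqrt{a}\,x^{2}\partial+2\varepsilon\sqrt{a}\,x$; for any polynomial $u=\sum_{j=0}^{d}u_{j}x^{j}$ with $u_{d}\neq0$ the coefficient of $x^{d+1}$ in $S_{4,3}(\partial+R)(u)$ is $2\varepsilon\sqrt{a}\,(d+1)u_{d}\neq0$, so by Theorem~\ref{th1} (note $n=2$ divides $m=4$) there is no order-$1$ right factor and $S_{4,3}$ is irreducible. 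From here on $ab\neq0$.

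Now assume $S_{4,3}$ is reducible in $\mathbb{C}(x)[\partial]$. Its order is $2$, so a proper factorization has both factors of order $1$; in particular $S_{4,3}$ admits an order-$1$ right factor, and by Theorem~\ref{th1} (the condition $n\mid m$ holds) there are a sign $\varepsilon=\pm1$ and a polynomial $u$ with $S_{4,3}(\partial+R)(u)=0$, where by Lemma~\ref{lem1} (here $m=2$, $q=3$, $r=2$)
\[
R=\varepsilon\sqrt{a}\Bigl(x^{2}+\tfrac{b}{2a}x-\tfrac{b^{2}}{8a^{2}}\Bigr),\qquad R^{2}-(ax^{4}+bx^{3})=-\tfrac{b^{3}}{8a^{2}}x+\tfrac{b^{4}}{64a^{3}}.
\]
Hence $S_{4,3}(\partial+R)=\partial^{2}+2R\,\partial+\bigl(R'+R^{2}-(ax^{4}+bx^{3})\bigr)$, where $R'+R^{2}-(ax^{4}+bx^{3})=\alpha x+\beta$ has degree $1$ with $\alpha=2\varepsilon\sqrt{a}-\tfrac{b^{3}}{8a^{2}}$. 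Writing $u=\sum_{j=0}^{d}u_{j}x^{j}$, $u_{d}\neq0$, and comparing coefficients of $x^{k}$ in $S_{4,3}(\partial+R)(u)=0$, the top coefficient (that of $x^{d+1}$) forces
\[
d=\varepsilon\sqrt{a}\binom{1/2}{3}\Bigl(\tfrac{b}{a}\Bigr)^{3}-1\in\mathbb{N},
\]
recovering Setoyanagi's necessary condition for $s=3$, while the remaining coefficients of $x^{0},\dots,x^{d}$ form a homogeneous linear system in $u_{0},\dots,u_{d}$ that is four-term banded (the degree-$k$ equation links $u_{k-1},u_{k},u_{k+1},u_{k+2}$), whose $(d+1)\times(d+1)$ coefficient matrix is precisely $(c_{i,j})_{1\le i,j\le d+1}$; its determinant is $\nabla(d,\alpha)$.

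Granting that $\nabla(d,\alpha)\neq0$, this system has only the trivial solution, contradicting $u_{d}\neq0$, and $S_{4,3}$ is irreducible. (The left-factor case adds nothing new: $S_{4,3}$ is self-adjoint, $S_{4,3}^{\nu}=S_{4,3}$, and replacing $R$ by $-R$ in the criterion $S_{4,3}^{\nu}(\partial-R_{i})(u)=0$ produces the same family of systems.) The whole difficulty is therefore the claim $\nabla(d,\alpha)\neq0$ for every $d\in\mathbb{N}$ and the value of $\alpha$ obtained above --- the determinant ``not considered in the past'' of the abstract. I would attack it by expanding the banded determinant along its last row, obtaining a second-order recursion in $d$, keeping track of the highest power of $\alpha$ (equivalently of $\varepsilon\sqrt{a}$) that occurs in $\nabla(d,\alpha)$, and then checking that neither admissible choice of $\varepsilon$ nor of the branch of $\sqrt{a}$ can make $\nabla(d,\alpha)$ vanish.
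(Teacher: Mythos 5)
Your overall route is the paper's: use Theorem \ref{th1} to convert an order-$1$ right factor into a polynomial solution of $S_{4,3}(\partial+R)(u)=0$, turn that into a banded homogeneous linear system on $\mathbb{C}_{d}[x]$, and conclude from $\nabla(d,\alpha)\neq0$. Your treatment of the degenerate cases and your derivation of the integrality condition $d=\varepsilon\sqrt{a}\binom{1/2}{3}(b/a)^{3}-1$ are correct. The gap is the sentence asserting that the coefficient matrix of your system ``is precisely $(c_{i,j})_{1\le i,j\le d+1}$.'' It is not. You work with the original operator, whose determining-factor derivative $R=\varepsilon\sqrt{a}\bigl(x^{2}+\frac{b}{2a}x-\frac{b^{2}}{8a^{2}}\bigr)$ contains a linear term; hence $2R\,\partial$ contributes an $x\partial$ piece whose matrix in the monomial basis is a \emph{non-constant} diagonal (its $(i,i)$ entry is proportional to $i-1$), whereas $(c_{i,j})$ has the constant diagonal $3(d+1)^{2}\alpha^{2}$; the constant term of $R'+R^{2}-(ax^{4}+bx^{3})$ and the $j=i\pm1$ entries differ as well. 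The paper avoids this by first performing the change of variable $x\mapsto x-\frac{b}{4a}$, which kills the $x^{3}$ term of $Q$ and hence the linear term of $R$; only after that normalization is the matrix the stated $(c_{i,j})$, depending on the single parameter $\alpha=\frac{2a}{b}$. (Note also that your ``$\alpha$'', namely $2\varepsilon\sqrt{a}-\frac{b^{3}}{8a^{2}}$, is not the paper's $\alpha$, so ``$\nabla(d,\alpha)$'' as you use it does not denote the determinant of the lemma.) The step can be repaired: translation by $\frac{b}{4a}$ is a unipotent (determinant $1$) change of basis of $\mathbb{C}_{d}[x]$ conjugating your operator into the normalized $S^{R}$, so the two determinants do agree --- but that is exactly the omitted change of variable, and it must be stated and justified.

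A secondary remark on your closing plan for $\nabla(d,\alpha)\neq0$: checking ``that neither admissible choice of $\varepsilon$ nor of the branch of $\sqrt{a}$ can make it vanish'' misses the structure of the problem. By homogeneity $\nabla(d,\alpha)=\mu(d)\alpha^{2(d+1)}$ with $\mu(d)$ an integer depending only on $d$; since $\alpha=\frac{2a}{b}\neq0$, non-vanishing is a purely combinatorial statement about $\mu(d)$, independent of $\varepsilon$ and of any square-root branch. The paper settles it by a parity argument when $d$ is even and, when $d$ is odd, by the three-term recurrence \ref{s} together with the positivity estimate $\nabla_{p+1}>4\lambda_{p}\nabla_{p}$; a bare second-order recursion in $d$ with bookkeeping of powers of $\alpha$ will not by itself yield the required lower bound.
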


\begin{proof}
If $a$ or $b=0,$ it is easy to verify that $S_{4,3}$ is irreducible. For the
following, we assume that $ab\neq0$. The change of variable $x-\frac{b}{4a}$
preserves the reducibility properties of the operator $S_{4,3}$. Let $S$ be
the operator obtained after the change of variable. We have%
\[
S=\partial^{2}-Q,\
\]
where$\ $%
\[
Q=ax^{4}-\frac{3b^{2}}{8a}x^{2}+\frac{b^{3}}{8a^{2}}x-\frac{3b^{4}}{4^{4}%
a^{3}}.
\]
The determining factors of $S$ are $\int R$ with%
\[
R=\varepsilon\sqrt{a}\left[  x^{2}-\frac{3b^{2}}{16a^{2}}\right]
,\ \ \varepsilon=\pm1.
\]
The operator $S^{R}=S\left(  \partial+R\right)  $ is equal to $\partial
^{2}+2R\partial+\left[  R^{2}+R^{\prime}-Q\right]  .$ $If$ $S$ is reducible
than the differential equation $S^{R}\left(  u\right)  =0$ have a polynomial
solution. However, there exist $d\in\mathbb{N}$ such that%
\[
\frac{b^{3}}{16sa^{3/2}}=d+1.
\]
As above we can suppose $s=1.$ If necessary, we change the argument of
$\sqrt{a}.$ We put $\alpha=\frac{2a}{b}$, consequently
\[
\sqrt{a}=2\left(  d+1\right)  \alpha^{3}%
\]
and%
\[
S^{R}=\partial^{2}+\left[  4\left(  d+1\right)  \alpha^{3}x^{2}-3\left(
d+1\right)  \alpha\right]  \partial+\left[  -4d\left(  d+1\right)  \alpha
^{3}x+3\left(  d+1\right)  ^{2}\alpha^{2}\right]
\]
The differential equation $S^{R}\left(  u\right)  =0$ have a polynomial
solution of degree $d$ if and only if $S^{R}$, considered as a linear operator
on $\mathbb{C}_{d}\left[  x\right]  $ is not an injection. This is equivalent
to determinant $\nabla(d,\alpha)=\left\vert c_{i,j}\right\vert _{1\leq i,j\leq
d+1}$, defined by%
\[
c_{i,j}=\left\{
\begin{array}
[c]{c}%
3\left(  d+1\right)  ^{2}\alpha^{2}%
,\ \ \ \ \ \ \ \ \ \ \ \ \ \ \ \ \ \ \ \ \ \ \ \ \ \ \ \ \ \ \ \ \ \ \ \ \ \ \ \ \ \ \ \ \ \ if\ j=i\\
-4\left(  d-i+2\right)  \left(  d+1\right)  \alpha^{3}%
,\ \ \ \ \ \ \ \ \ \ \ \ \ \ \ \ \ \ \ \ \ \ \ \ if\ \ \ j=i-1\\
-3i\left(  d+1\right)  \alpha
,\ \ \ \ \ \ \ \ \ \ \ \ \ \ \ \ \ \ \ \ \ \ \ \ \ \ \ \ \ \ \ \ \ \ \ \ \ \ \ \ \ \ \ if\ \ j=i+1\\
i.\left(  i+1\right)
,\ \ \ \ \ \ \ \ \ \ \ \ \ \ \ \ \ \ \ \ \ \ \ \ \ \ \ \ \ \ \ \ \ \ \ \ \ \ \ \ \ \ \ \ \ \ \ \ \ if\ j=i+2\\
0\ \ \ \ \ \ \ \ \ \ \ \ \ \ \ \ \ \ \ \ \ \ \ \ \ \ \ \ \ \ \ \ \ \ \ \ \ \ \ \ \ \ \ \ \ \ \ \ \ \ \ \ \ \ \ \ \ \ \ \ \ \ \ \ \ \ \ \ \ \ else
\end{array}
\right.
\]

is zero.
\end{proof}

\begin{lemma}
$\nabla(d,\alpha)\neq0.$
\end{lemma}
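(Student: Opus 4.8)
The plan is to strip the parameter $\alpha$ off the determinant, reduce $\nabla(d,\alpha)$ to a single scalar governed by a short linear recursion, and then show that scalar is positive.

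\emph{Removing $\alpha$.} Conjugating $(c_{i,j})_{1\le i,j\le d+1}$ by $\mathrm{diag}(1,\alpha^{-1},\dots,\alpha^{-d})$ turns $c_{i,j}$ into $\alpha^{\,j-i}c_{i,j}$; running through the four cases $j\in\{i-1,i,i+1,i+2\}$ shows each nonzero entry is thereby multiplied by exactly $\alpha^{2}$. Hence $\nabla(d,\alpha)=\alpha^{2(d+1)}\nabla(d,1)$, and since $\alpha=2a/b\neq0$ it suffices to prove $\nabla(d,1)\neq0$.

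\emph{A scalar recursion.} The matrix $N:=(c_{i,j})|_{\alpha=1}$ is upper Hessenberg ($c_{i,j}=0$ for $j<i-1$), and moreover $c_{i,k}=0$ for $i<k-2$, so its leading principal minors $\Delta_{k}$ satisfy the four-term recursion $\Delta_{k}=c_{k,k}\Delta_{k-1}-c_{k-1,k}c_{k,k-1}\Delta_{k-2}+c_{k-2,k}c_{k-1,k-2}c_{k,k-1}\Delta_{k-3}$ with $\Delta_{0}=1,\ \Delta_{-1}=\Delta_{-2}=0$. Substituting the explicit entries and putting $\Delta_{k}=(3(d+1)^{2})^{k}\gamma_{k}$ collapses this to
\[
\gamma_{k}=\gamma_{k-1}-p_{k}\gamma_{k-2}+\tfrac13\,p_{k-1}p_{k}\,\gamma_{k-3},\qquad
p_{k}=\frac{4(k-1)(d-k+2)}{3(d+1)^{2}},\qquad \gamma_{0}=1,
\]
where, since $(k-1)+(d-k+2)=d+1$, the AM--GM inequality gives $0\le p_{k}\le\tfrac13$ for $1\le k\le d+1$ and $p_{1}=p_{d+2}=0$. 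Because $\nabla(d,1)=\Delta_{d+1}=(3(d+1)^{2})^{d+1}\gamma_{d+1}$, the Lemma reduces to $\gamma_{d+1}>0$. (Equivalently: $\nabla(d,1)=0$ exactly when $S^{R}(u)=0$ has a polynomial solution; comparing top coefficients forces its degree to be $d$, and writing it as $\sum_{j}v_{j}x^{d-j}$ with $v_{0}=1$ one recovers the same recursion via $v_{j}=\frac{(3(d+1)/4)^{j}}{j!}\gamma_{j}$, the obstruction to a genuine polynomial being $\gamma_{d+1}=0$.)

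\emph{Positivity.} I would prove by induction on $k$ that $\gamma_{k}>0$, that $(\gamma_{k})_{k\ge1}$ is non-increasing, and that $\gamma_{k-1}\le C\gamma_{k}$ for a suitable absolute constant $C$. Monotonicity is essentially free given the other two: $\gamma_{k-1}-\gamma_{k}=p_{k}\bigl(\gamma_{k-2}-\tfrac13 p_{k-1}\gamma_{k-3}\bigr)\ge0$ as soon as $\gamma_{k-2}\ge\tfrac19\gamma_{k-3}$ (using $p_{k-1}\le\tfrac13$); monotonicity then gives $0\le\gamma_{k}\le\gamma_{0}=1$, and positivity propagates through $\gamma_{k}\ge\gamma_{k-1}-p_{k}\gamma_{k-2}\ge(1-Cp_{k})\gamma_{k-1}$ once $\gamma_{k-2}\le C\gamma_{k-1}$. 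The few small indices $k\le3$ (hence all $d\le2$) are disposed of by hand from the explicit $p_{k}$.

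\emph{The main obstacle} is the ratio bound $\gamma_{k-1}\le C\gamma_{k}$, which is delicate rather than routine. When $p_{k}\equiv\tfrac13$ the recursion has characteristic polynomial $(\rho-\tfrac13)^{3}$, so near the midpoint $k\approx(d+1)/2$ the sequence behaves like $(1/3)^{ck}$ times a polynomial in $k$; the optimal $C$ tends to $3$ as $d\to\infty$, and $\gamma_{d+1}$ is positive only by a margin that shrinks exponentially in $d$. I expect to close the induction either with a $k$-dependent Lyapunov functional $\gamma_{k}+u_{k}\gamma_{k-1}+v_{k}\gamma_{k-2}$ adapted to the local root structure, or with a WKB-type comparison of $\gamma_{k}$ to the product of the dominant roots $\rho_{+}(p_{i})\in(0,1]$ of $\rho^{3}-\rho^{2}+p\rho-\tfrac13 p^{2}$, controlling the effect of the two subdominant complex roots (whose modulus stays strictly below $\rho_{+}$). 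This near-resonant window around $k\approx(d+1)/2$ is the only serious difficulty; the two reductions above and the behaviour of $(\gamma_k)$ away from the midpoint are straightforward.
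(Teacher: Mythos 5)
Your two reductions are sound and in fact coincide with the paper's: the homogeneity $\nabla(d,\alpha)=\alpha^{2(d+1)}\nabla(d,1)$ is the paper's first observation, and your three-term minor recursion is exactly the paper's relation (4.2) after the normalization $\Delta_{k}=y^{k}\gamma_{k}$, $y=3(d+1)^{2}$, $p_{k}=4\lambda_{k-2}/y$ (the products of symmetric off-diagonal pairs are palindromic, so leading and trailing principal minors obey the same recursion). But the decisive step --- a lower bound on the ratio $\gamma_{k}/\gamma_{k-1}$ that survives the window $k\approx(d+1)/2$, where $p_{k}$ approaches its maximum $1/3$ and the recursion becomes near-resonant --- is not proved. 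You name it yourself as ``the main obstacle,'' note that your own induction degenerates there (with $C\to3$ and $p_{k}\to\tfrac13$ the propagated bound $\gamma_{k}\ge(1-Cp_{k})\gamma_{k-1}$ gives a constant tending to $0$, which cannot re-establish $\gamma_{k-1}\le C\gamma_{k}$ at the next step), and you offer two candidate strategies (a $k$-dependent Lyapunov functional, a WKB comparison) without carrying either out. That ratio bound \emph{is} the content of the lemma; everything before it is bookkeeping.

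For comparison, the paper's version of your ratio bound is $\nabla_{p+1}>4\lambda_{p}\nabla_{p}$, i.e.\ $\gamma_{p+1}>p_{p+2}\,\gamma_{p}$, and it is closed by contradiction: taking a minimal index $p$ at which the bound fails, chaining the recursion three steps backwards and using minimality at each stage, one arrives at the purely numerical inequality
\[
y^{2}-y\left(4\lambda_{p}+4\lambda_{p-1}+8\lambda_{p-2}\right)+32\lambda_{p}\lambda_{p-2}+16\lambda_{p-1}\lambda_{p-2}<0,
\]
whose left-hand side $h(p)$ is then shown to be minimized near $p=\tfrac{d+1}{2}$ with minimum value $16d^{2}-32d+48>0$ (this is done for $d$ odd; $d$ even is dispatched separately by reducing $\mu(d)$ modulo $2$). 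Note that this argument works precisely \emph{because} it converts the near-resonant regime into a quadratic in $y=3(d+1)^{2}$ whose discriminant stays negative --- the exponentially thin margin you worry about in $\gamma_{d+1}$ never has to be quantified. To complete your proposal you would need either to reproduce an argument of this strength or to actually construct the Lyapunov functional you allude to; as written, the proof of positivity is a plan, not a proof.
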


\begin{proof}
$\nabla(d,\alpha)$ is a polynomial in two variables $d$ and $\alpha$,
homogeneous of degree $2(d+1)$ compared to $\alpha.$ Therefore%
\[
\nabla(d,\alpha)=\mu\left(  d\right)  \alpha^{2\left(  d+1\right)  }.
\]
It suffices to show that $\mu\left(  d\right)  \neq0$. For this purpose, we
note that if $d$ is an even integer then $\mu\left(  d\right)  $ is congruent
to $1$ modulo $2$ and $\mu\left(  d\right)  $ is not zero. Assume, for the
rest that $d$ is an odd integer. Let $\left(  a_{i,j}\right)  $ the matrix
obtained from $\left(  c_{i,j}\right)  $ after putting $\alpha=1$. Then%
\[
\left\vert a_{i,j}\right\vert =\mu\left(  d\right)  .
\]
We consider the order $p$ determinants extracted from $\left(  a_{i,j}\right)
$ as follows:%
\[
\nabla_{p}=\left\vert a_{i,j}\right\vert _{d-p+2\leq i,j\leq d+1},
\]
for example%
\begin{align*}
\nabla_{2}  & =%
\begin{vmatrix}
3\left(  d+1\right)  ^{2} & -3d\left(  d+1\right) \\
-4\left(  d+1\right)  & 3\left(  d+1\right)  ^{2}%
\end{vmatrix}
\\
& =3\left(  d+1\right)  ^{2}\left(  3d^{2}+2d+3\right)
\end{align*}
and%
\begin{align*}
\nabla_{0}  & =1,\ \ \ \\
\nabla_{d+1}  & =\mu\left(  d\right)  .
\end{align*}
Consequently, after developing $\ \nabla_{d+1}$ with respect to its first
colon, we obtain the recurrence relation%
\begin{equation}
\nabla_{p+1}=\left(  d+1\right)  ^{2}\left[  3\nabla_{p}-12\lambda_{p-1}%
\nabla_{p-1}+16\lambda_{p-1}\lambda_{p-2}\nabla_{p-2}\right]  ,\label{s}%
\end{equation}
for $p=2,$ .., $d,$ and%
\[
\lambda_{k}=\left(  k+1\right)  \left(  d-k\right)  .
\]
We propose to prove that, for $p\in\left\{  1,..,d-1\right\}  $%
\[
\nabla_{p+1}>4\lambda_{p}\nabla_{p}.
\]
For the rest we put $y=3\left(  d+1\right)  ^{2},$ then%
\begin{equation}
\left\{
\begin{array}
[c]{c}%
\nabla_{1}%
=y\ \ \ \ \ \ \ \ \ \ \ \ \ \ \ \ \ \ \ \ \ \ \ \ \ \ \ \ \ \ \ \ \ \ \ \ \ \ \ \ \ \ \ \ \ \ \ \ \ \ \ \ \ \ \\
\nabla_{2}=\left(  y-4\lambda_{0}\right)  \nabla_{1}>4\nabla_{1}%
\ \ \ \ \ \ \ \ \ \ \ \ \ \ \ \ \ \ \ \ \ \ \ \ \ \ \ \ \\
\nabla_{3}=y\left(  y^{2}-\left(  4\lambda_{0}+4\lambda_{1}\right)
y+\frac{16}{3}\lambda_{0}\lambda_{1}\right)  >4\lambda_{2}\nabla_{2}.
\end{array}
\right. \label{s1}%
\end{equation}
We assume that there exists an integer $q\in\left\{  1,...,d-1\right\}  $ such
that%
\[
\nabla_{q+1}\leq4\lambda_{q}\nabla_{q}.
\]
Let $p=\inf\left\{  q;\ \nabla_{q}\leq4\lambda_{q}\nabla_{q}\right\}  .$ The
system \ref{s1} leads that $p\geq4.$\newline By writing%
\[
\nabla_{p}>4\lambda_{p-1}\nabla_{p-1}%
\]
and%
\[
\nabla_{p+1}\leq4\lambda_{p}\nabla_{p},
\]
we can deduce the following relation%
\[
\left(  y-4\lambda_{p}\right)  \nabla_{p}\leq4\lambda_{p-1}y\left(
\nabla_{p-1}-\frac{4}{3}\lambda_{p-2}\nabla_{p-2}\right)  ,
\]
where%
\begin{equation}
\nabla_{p}\leq\frac{4\lambda_{p-1}y}{y-4\lambda_{p}}\left[  \nabla_{p-1}%
-\frac{4}{3}\lambda_{p-2}\nabla_{p-2}\right]  .\label{s2}%
\end{equation}
The relations%
\[
\nabla_{p}=y\left[  \nabla_{p-1}-4\lambda_{p-2}\nabla_{p-2}+\frac{16}%
{3}\lambda_{p-2}\lambda_{p-3}\nabla_{p-3}\right]  ,
\]
and \ref{s2} leads%
\begin{equation}
y\left(  1-\frac{4\lambda_{p-1}}{y-4\lambda_{p}}\right)  \nabla_{p-1}%
\leq4\lambda_{p-2}y\left[  1-\frac{\frac{4}{3}\lambda_{p-1}}{y-4\lambda_{p}%
}\right]  \nabla_{p-2}-\frac{16}{3}\lambda_{p-2}\lambda_{p-3}y\nabla
_{p-3}.\label{s3}%
\end{equation}
Replacing $\nabla_{p-1}$ by its expression in the inequality \ref{s3} we
obtain%
\begin{align}
& \left[  y^{2}\left(  1-\frac{4\lambda_{p-1}}{y-4\lambda_{p}}\right)
-4\lambda_{p-2}y\left[  1-\frac{\frac{4}{3}\lambda_{p-1}}{y-4\lambda_{p}%
}\right]  \nabla_{p-2}\right] \label{s4}\\
& \leq\left[  4\lambda_{p-3}y^{2}\left(  1-\frac{4\lambda_{p-1}}%
{y-4\lambda_{p}}\right)  -\frac{16}{3}y\lambda_{p-2}\lambda_{p-3}y\right]
\nonumber\\
& \nabla_{p-3}-\frac{16}{3}y^{2}\left(  1-\frac{4\lambda_{p-1}}{y-4\lambda
_{p}}\right)  \lambda_{p-3}\lambda_{p-4}y\nabla_{p-4}.\nonumber
\end{align}
From the definition of $p$, we deduce that $\nabla_{p-3}>4\lambda_{p-4}%
\nabla_{p-4}$, and the relationship \ref{s4} then leads%
\begin{align}
& y\left[  y\left(  1-\frac{4\lambda_{p-1}}{y-4\lambda_{p}}\right)
-4\lambda_{p-2}\left[  1-\frac{\frac{4}{3}\lambda_{p-1}}{y-4\lambda_{p}%
}\right]  \nabla_{p-2}\right] \label{s5}\\
& <4\lambda_{p-3}y\left[  \frac{2}{3}y\left(  1-\frac{4\lambda_{p-1}%
}{y-4\lambda_{p}}\right)  -\frac{4}{3}\lambda_{p-2}\right]  \nabla
_{p-3}.\nonumber
\end{align}
Similarly, the relationships%
\[
\nabla_{p-2}>4\lambda_{p-3}\nabla_{p-3}%
\]
and \ref{s5} leads%
\[
y\left(  y-4\lambda_{p}+4\lambda_{p-1}\right)  +4\lambda_{p-2}\left(
y-4\lambda_{p}\right)  -12\lambda_{p-2}\left(  y-4\lambda_{p}-\frac{4}%
{3}\lambda_{p-1}\right)  <0.
\]
Thereby, we have%
\begin{equation}
y^{2}-y\left(  4\lambda_{p}+4\lambda_{p-1}+8\lambda_{p-2}\right)
+32\lambda_{p}\lambda_{p-2}+16\lambda_{p-1}\lambda_{p-2}<0\label{s6}%
\end{equation}
We put, for $p\in\left\{  2,..,d-1\right\}  ,$%
\[
h\left(  p\right)  =y^{2}-y\left(  4\lambda_{p}+4\lambda_{p-1}+8\lambda
_{p-2}\right)  +32\lambda_{p}\lambda_{p-2}+16\lambda_{p-1}\lambda_{p-2}%
\]
we differentiate the function $h$ and we replace replace $\lambda_{k}$ by
$\left(  k+1\right)  \left(  d-k\right)  ,$ thus%
\begin{align*}
h^{\prime}\left(  p\right)   & =8\left(  d-p+3\right)  \left(  4\lambda
_{p}+2\lambda_{p-1}-y\right)  +4\left(  d-2p-1\right)  \left(  8\lambda
_{p-2}-y\right) \\
& +4\left(  d-2p+1\right)  \left(  4\lambda_{p-2}-y\right)  .
\end{align*}
For $k\in\left\{  0,1,..,d-1\right\}  $ the function $\lambda_{k}$ varies
between $d$ and $\frac{\left(  d+1\right)  ^{2}}{2}$. As $y=3(d+1)^{2},$ the
sign of $h^{\prime}$ is strictly positive if $p>\frac{d+3}{2}$; is strictly
negative if $p<\frac{d-1}{2}$ (as $d$ is an odd integer $d$, view the
beginning of the prof, and $p$ is an integer). We calculate $h\left(
\frac{d-1}{2}\right)  ,$ $h\left(  \frac{d+1}{2}\right)  $ and $h\left(
\frac{d+3}{2}\right)  $. We find%
\begin{align*}
h\left(  \frac{d-1}{2}\right)   & =56d^{2}112d+120,\\
h\left(  \frac{d+1}{2}\right)   & =16d^{2}32d+48,\\
h\left(  \frac{d+3}{2}\right)   & =24\left(  d+1\right)  ^{2}.
\end{align*}
For $p\in\left\{  2,..,d-1\right\}  ,$ the least value of $h$ is $h\left(
\frac{d+1}{2}\right)  $ which is strictly greater than zero, which contradicts
inequality \ref{s6}. Therefore,%
\[
\nabla_{p+1}>4\lambda_{p}\nabla_{p},\ \ \ \ p\in\left\{  1,..,d-1\right\}  .
\]
By putting $p=d$ in \ref{s} and $p=d-1$ in \ref{s7}, we obtain%
\[
\nabla_{d+1}=y\left[  \nabla_{d}-4\lambda_{d-1}\nabla_{d-1}+\frac{16}%
{3}\lambda_{d-1}\lambda_{d-2}\nabla_{d-2}\right]
\]
and%
\[
\nabla_{d}>4\lambda_{d-1}\nabla_{d-1},
\]
which leads to $\nabla_{d+1}>\frac{16}{3}y\lambda_{d-1}\lambda_{d-2}%
\nabla_{d-2},$ and over p and $\nabla_{d+1}$ is strictly positive, which
achieve the proof.
\end{proof}

\end{document}